\newtheorem{remark}[theorem]{Remark}
\newtheorem{example}[theorem]{Example}
\begin{document}

\bibliographystyle{plain}
\title{
The structure of strong linear preservers of gw-majorization on $
\mathbf{M}_{{n,m}}$\thanks{Received by the editors on ... Accepted
for publication on ....   Handling Editor: ...}}

\author{
A. Armandnejad\thanks{Department of Mathematics, Valiasr
University of Rafsanjan, 7713936417, Rafsanjan, Iran.\ \ \ \ \ \ \
 armandnejad@mail.vru.ac.ir}
\and A. Salemi\thanks{Department of Mathematics, Shahid Bahonar
University of Kerman, 7619614111, Kerman, Iran.
salemi@mail.uk.ac.ir}}

\pagestyle{myheadings} \markboth{A. Armandnejad and A. Salemi}{The
structure of strong linear preservers of gw-majorization on $
\mathbf{M}_{{n,m}}$} \maketitle

\begin{abstract}
Let $\mathbf{M}_{n,m}$ be the set of all $n\times m$
  matrices with entries in $\mathbb{F}$, where $\mathbb{F}$ is the field of real or complex numbers.
  A matrix $R\in M_n$ with the property Re=e,
 is said to be a g-row stochastic (generalized row stochastic) matrix. Let A,B$\in \mathbf{M}_{n,m}$,
  so B is said to be  gw-majorized by A if
  there exists an n$\times $n g-row stochastic
matrix R such that B=RA. In this paper we characterize all linear
operators that  strongly preserve gw-majorization on $
\mathbf{M}_{n,m}$ and all linear operators that  strongly preserve
matrix majorization on $ \mathbf{M}_{n}$.
\end{abstract}

\begin{keywords}
Preserver, strong preserver, g-row stochastic
  matrices, gw-majorization.
\end{keywords}
\begin{AMS}
15A03, 15A04, 15A51.
\end{AMS}

\section{Introduction} A nonnegative matrix $R\in M_n$ with the property Re=e,
 is said to be a row stochastic matrix. Let A,B$\in \mathbf{M}_{n,m}$,
  so B is said to be  matrix-majorized by A if there exists an n$\times $n row stochastic
matrix R such that B=RA. The definition of matrix majorization was
introduced by Dahl in \cite{D}. For more information about
majorization see \cite{Bhatia} and \cite{marshal}.

 Let $\sim$ be a relation on $\mathbf{M}_{n,m}$ .
 A linear operator T:$\mathbf{M}_{n,m}\longrightarrow\mathbf{M}_{n,m}$
  is said to be a linear
strong preserver of $\sim$ whenever:
\begin{eqnarray} \nonumber
 x\sim y \ \ \ \Longleftrightarrow \ \ \ T(x)\sim T(y) .
 \end{eqnarray}
  A  matrix $D\in M_n$ with the properties De=e and $D^t$e=e,
 is said to be a g-doubly stochastic matrix. Let A,B$\in \mathbf{M}_{n,m}$,
  so B is said to be  gs-majorized by A if there exists an n$\times $n g-doubly stochastic
matrix D such that B=DA. The definition of gs-majorization was
introduced in \cite{A.S3} and authors proved that a linear
operator $T:\mathbf{M}_{n, m}\rightarrow \mathbf{M}_{n, m}$
strongly preserves gs-majorization if and only if $T(X)=AXR+JXS$
for some $R,S\in \mathbf{M}_m$ and $A\in\mathbf{M}_n$,  such that
A, $R$ and $R+nS$ are invertible and  A is g-doubly stochastic .

 In \cite{B.L.L}, Beasley, S.-G. Lee and
Y.H Lee proved that, if a linear operator
T:$\mathbf{M}_{n}$$\rightarrow
$$\mathbf{M}_{n}$  strongly preserves matrix majorization then,
 there exist a permutation P and an invertible matrix
$M\in\mathbf{M}_n$ such that $T(X)=PXM$  for every  X in
$span\{\mathbf{R}_n\}$, where $\mathbf{R}_n$ is the set of all
$n\times n$ row stochastic matrices, and currently  A.M. Hasani
and M. Radjabalipour in [\ref{radjab}] showed that:
\begin{eqnarray}\label{rad}
 T(X)=PXM,
\forall \ X\in \ \mathbf{M}_n.
\end{eqnarray}
In \cite{A.S2}  authors introduced gw-majorization and
characterized its strong linear preservers on $\mathbf{M}_n$.  In
this paper, we will to show that a linear operator
$T:\mathbf{M}_{n,m}\rightarrow \mathbf{M}_{n,m}$
 strongly preserves gw-majorization if and only if $T(X)=AXB$ for every  X in $\mathbf{M}_{n,m}$,
where $A\in\mathbf{GR}_n$ and $B\in \textbf{M}_m$ are invertible
matrices. In the end  we state a corollary that regains
(\ref{rad}).

 Throughout  this paper, $\mathbf{GR}_n$ is the set of all g-row stochastic
 matrices , e=$(1,...,1)^t$$\in \mathbb{F}^n$ and J=$ee^t$$\in \mathbf{M}_n$.
\section{Strong linear preservers of gw-majorization on $\mathbf{M}_{n,m}$}
In this section  we state some properties of gw-majorization on
$\mathbf{M}_{n,m}$ then we characterize all linear operators on
 $\mathbf{M}_{n,m}$ that strongly preserve gw-majorization.

A matrix $R\in \textbf{M}_n$ with the property Re=e, is said to be
a g-row stochastic matrix. For more details
 see \cite{C.L.}.
\begin{definition}Let $A, B\in \mathbf{M}_{n,m}$.
 The matrix B is said to be  gw-majorized by A if
  there exists an n$\times $n g-row stochastic
matrix R such that B=RA and denoted by A$\succ_{gw}$B .

\end{definition}

\begin{proposition}\label{inv} Let
T :$\mathbf{M}_{n,m}$$\rightarrow $$\mathbf{M}_{n,m}$ be a linear
operator that strongly preserves gw-majorization . Then T is
invertible.
\end{proposition}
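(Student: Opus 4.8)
The plan is to reduce the statement to injectivity. Since $\mathbf{M}_{n,m}$ is a finite-dimensional vector space (of dimension $nm$) and $T$ is linear, $T$ is invertible if and only if it is one-to-one, i.e. if and only if $\ker T=\{0\}$. So I would aim to show that $T(B)=0$ forces $B=0$.

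The key observation I would exploit is that the zero matrix gw-majorizes only itself. Indeed, if $0\succ_{gw}B$ then by the definition there is a g-row stochastic $R$ with $B=R\cdot 0=0$; conversely $0\succ_{gw}0$ trivially (take $R=I$). Hence $0\succ_{gw}B$ holds if and only if $B=0$. This is the one structural fact about gw-majorization that the argument needs, and it is immediate from the definition.

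Next I would feed the zero matrix into the strong-preserver property. Taking $x=0$ and $y=B$ in the equivalence $x\succ_{gw}y\Longleftrightarrow T(x)\succ_{gw}T(y)$, and using $T(0)=0$ (linearity), I obtain
\begin{eqnarray}\nonumber
0\succ_{gw}B \quad\Longleftrightarrow\quad 0\succ_{gw}T(B).
\end{eqnarray}
Now suppose $T(B)=0$. Then $0\succ_{gw}T(B)$ holds (again with any g-row stochastic $R$, since $R\cdot 0=0$), so by the equivalence above $0\succ_{gw}B$, and by the observation of the previous paragraph this gives $B=0$. Therefore $\ker T=\{0\}$, and $T$ is invertible.

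I do not expect a genuine obstacle here: the whole argument hinges on correctly reading the directed relation $\succ_{gw}$ and applying the strong-preserver equivalence in the right orientation with the zero matrix as one argument. The only point requiring care is that g-row stochastic matrices need not be nonnegative, but this plays no role, since the identity $R\cdot 0=0$ that drives the proof holds for every matrix $R$.
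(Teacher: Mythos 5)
Your proposal is correct and follows essentially the same route as the paper: both show $\ker T=\{0\}$ by noting that $T(B)=0$ gives $0=T(0)\succ_{gw}T(B)$, hence $0\succ_{gw}B$ by strong preservation, forcing $B=R\cdot 0=0$, and then invoking finite-dimensionality. Your write-up merely makes explicit two points the paper leaves implicit (that injectivity suffices and that $0\succ_{gw}X$ iff $X=0$).
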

\begin{proof}
Suppose T(A)=0. Since T is linear and 0$\succ_{gw} $T(A), T(0)
$\succ_{gw}$T(A). Therefore,  0$\succ_{gw}$A because T strongly
preserves gw-majorization. Then, there exists an n$\times$n g-row
stochastic matrix R such that A=R0. Then, A=0 and hence T is
invertible.
\end{proof}
\begin{remark}\label{close}Let A,B be two g-row  stochastic matrices
then, AB  and $A^{-1}$ (If A is invertible) are g-row stochastic
matrices.
\end{remark}

 The relation gw-majorization on
 $\mathbf{M}_{n,m}$ has the following properties :\\
Let $X,Y\in \mathbf{M}_{n,m}$, $A,B\in \mathbf{GR}_n$, $C\in
\mathbf{M}_m$ and $\alpha ,\beta\in \mathbb{F}$ such that A,B and
C are invertible and $\alpha\neq0$. Then the following conditions
are equivalent:

1. \ $X\succ_{gw}Y$

2. \ $AX\succ_{gw}BY$

3. \ $\alpha X+\beta J_{n,m}\succ_{gw}\alpha Y+\beta J_{n,m}$

4. \ $XC\succ_{gw}YC$

Where $J_{n,m}$ is the $n\times m$ matrix whose all entries are
equal one.

Now, we characterize the linear preservers of gw-majorization on
$\mathbb{F}^n$.

 \begin{lemma}\label{lem1}
Let $x\in\mathbb{F}^n$. Then $x\succ_{gw}y$, $\forall
y\in\mathbb{F}^n$ if and only if
 $x\notin span\{e\}$.
\end{lemma}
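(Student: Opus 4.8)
The plan is to unwind the definition: for column vectors $x,y\in\mathbb{F}^n$ (viewed as elements of $\mathbf{M}_{n,1}$), the relation $x\succ_{gw}y$ asserts the existence of an $n\times n$ g-row stochastic matrix $R$ (that is, a matrix with $Re=e$) satisfying $y=Rx$. So the assertion ``$x\succ_{gw}y$ for all $y$'' says precisely that every vector of $\mathbb{F}^n$ lies in the set $\{Rx : Re=e\}$, and I would prove the lemma by analyzing this set.

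For the forward implication I would argue by contraposition. Suppose $x\in span\{e\}$, say $x=\alpha e$. Then for any g-row stochastic $R$ we have $Rx=\alpha Re=\alpha e=x$, so the set $\{Rx:Re=e\}$ collapses to the single point $\{x\}$. Since $\mathbb{F}^n$ contains vectors other than $x$, the vector $x$ fails to gw-majorize every $y$, which is the contrapositive of the ``only if'' direction.

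For the converse, assume $x\notin span\{e\}$. In particular $x\neq 0$ and $x$ is not a scalar multiple of $e$, so $\{e,x\}$ is a linearly independent set in $\mathbb{F}^n$. Fix an arbitrary $y\in\mathbb{F}^n$. I would then construct a matrix $R$ meeting the two requirements $Re=e$ and $Rx=y$ by defining the associated linear operator on the two-dimensional subspace $W=span\{e,x\}$ through $e\mapsto e$ and $x\mapsto y$ (well-defined and linear precisely because $e$ and $x$ are independent), and extending it to all of $\mathbb{F}^n$ by sending a chosen complement of $W$ to $0$. The resulting $R$ satisfies $Re=e$, hence is g-row stochastic, and $Rx=y$, so $x\succ_{gw}y$; as $y$ was arbitrary this gives the ``if'' direction.

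The computations here are routine; the only point requiring care is in the converse, where one must use that $Re=e$ is the sole defining constraint on a g-row stochastic matrix over $\mathbb{F}$ (no nonnegativity is imposed), so that prescribing $R$ on the independent pair $e,x$ and extending freely does yield an admissible $R$. I expect this observation, together with the verification that $e$ and $x$ are linearly independent whenever $x\notin span\{e\}$, to be the main (though modest) obstacle.
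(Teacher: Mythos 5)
Your proposal is correct, and it establishes the same two implications as the paper, but the converse is carried out by a genuinely different (more abstract) construction. The paper's proof is explicit and coordinate-based: since $x\notin span\{e\}$, two components satisfy $x_k\neq x_l$, and for arbitrary $y$ it writes down $R=(r_{ij})$ with $r_{ik}=\frac{y_i-x_l}{x_k-x_l}$, $r_{il}=\frac{-y_i+x_k}{x_k-x_l}$ and $r_{ij}=0$ for $j\neq k,l$; each row sums to $1$, so $Re=e$, and $Rx=y$ by a two-term computation. Your argument is the coordinate-free counterpart: $x\notin span\{e\}$ makes $\{e,x\}$ linearly independent, so one may prescribe a linear map by $e\mapsto e$, $x\mapsto y$ and extend it (by zero, say) on a complement of $span\{e,x\}$. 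Both constructions rest on the identical observation, which you rightly flag as the crux --- over $\mathbb{F}$ the only constraint defining $\mathbf{GR}_n$ is $Re=e$, with no nonnegativity, so all one needs is the independence of $e$ and $x$ --- and in fact the paper's formula is your extension argument done in coordinates: it solves, row by row, the system $r_{ik}+r_{il}=1$, $r_{ik}x_k+r_{il}x_l=y_i$, which is solvable precisely because $x_k\neq x_l$. Your route buys brevity and makes explicit where linear independence enters; the paper's buys a completely explicit $R$, supported on just two columns, whose membership in $\mathbf{GR}_n$ and action on $x$ are verified by inspection, with no choice of basis or complement. Your forward direction coincides with the paper's, which merely calls it clear: the collapse $Rx=\alpha Re=\alpha e=x$ that you spell out is exactly the intended justification.
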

\begin{proof}
Let $x\succ_{gw} y$, $\forall y\in\mathbb{F}^n$, it is clear that
$x\notin span\{e\}$. Conversely, let $x=(x_1,\cdots,x_n)^t\notin
span\{e\}$, then x has at least two distinct components such as
$x_{k}$ and $x_{l}$. Let $y=(y_1,\cdots,y_n)^t\in\mathbb{F}^n$ be
arbitrary, for $1\leq i,j\leq n$ define

$r_{ik}= \frac{y_i-x_l}{x_k-x_l} $ , $r_{il}=
\frac{-y_i+x_k}{x_k-x_l}$ and $r_{ij}=0$ If $ j\neq k,l$. Then
$R=(r_{ij})\in \mathbf{GR}_{n}$ and $ Rx=y$, so $ x \succ_{gw} y$.
\end{proof}
\begin{lemma}\label{lem2}
Let $T:\mathbb{F}^n \rightarrow \mathbb{F}^n$ be a non zero linear
operator. Then T preserves  gw-majorization if and only if
$x\notin span\{e\}$ implies that $T(x)\notin span\{e\}$.
\end{lemma}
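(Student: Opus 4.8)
The plan is to prove both implications by leveraging Lemma~\ref{lem1}, which pins down exactly which vectors gw-majorize everything. The backward implication is the more direct of the two, so I would dispose of it first by a case split on whether $x$ lies in $span\{e\}$.

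For the backward direction, assume that $x\notin span\{e\}$ forces $T(x)\notin span\{e\}$, and suppose $x\succ_{gw}y$. If $x\notin span\{e\}$, then $T(x)\notin span\{e\}$ by hypothesis, so Lemma~\ref{lem1} gives $T(x)\succ_{gw}z$ for every $z\in\mathbb{F}^n$, in particular $T(x)\succ_{gw}T(y)$. If instead $x=ce$ for some scalar $c$, then any g-row stochastic $R$ satisfies $Rx=cRe=ce=x$, so $x\succ_{gw}y$ forces $y=x$; hence $T(y)=T(x)$ and $T(x)\succ_{gw}T(x)$ via the identity matrix. Either way $T(x)\succ_{gw}T(y)$, so $T$ preserves gw-majorization.

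For the forward direction, assume $T$ preserves gw-majorization and fix $x\notin span\{e\}$; the goal is $T(x)\notin span\{e\}$. The key observation is the same computation used above: a vector $ce\in span\{e\}$ gw-majorizes only itself, since $R(ce)=ce$ for every g-row stochastic $R$. Suppose for contradiction that $T(x)=ce$. By Lemma~\ref{lem1}, $x\succ_{gw}y$ for every $y$, and preservation yields $T(x)\succ_{gw}T(y)$ for every $y$. Since $T(x)=ce$ majorizes only $ce$, this forces $T(y)=ce$ for all $y$. Taking $y=0$ and using linearity gives $0=T(0)=ce$, so $c=0$ and thus $T$ is identically zero, contradicting the hypothesis that $T$ is nonzero. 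Therefore $T(x)\notin span\{e\}$.

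I expect the only real subtlety to lie in the forward direction, specifically in recognizing that constant vectors gw-majorize nothing but themselves, and then combining this with linearity to collapse the entire range of $T$ to a single point. Once that observation is in hand, the nonvanishing of $T$ immediately delivers the contradiction, and no further computation is required.
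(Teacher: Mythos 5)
Your proof is correct and follows essentially the same route as the paper: the backward direction by the same case split on whether $x\in span\{e\}$ together with Lemma~\ref{lem1}, and the forward direction by deriving $T=0$ from $T(x)\in span\{e\}$. The only difference is that you spell out the step the paper compresses into ``if $T(x)\in span\{e\}$ then $T=0$'' (constant vectors gw-majorize only themselves, then take $y=0$ and use linearity), which is a welcome clarification rather than a new idea.
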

\begin{proof}
Let T preserves  gw-majorization. Assume that $x\notin span\{e\}$,
then $x\succ_{gw} y$, $\forall y \in\mathbb{F}^n$ by Lemma
\ref{lem1}. Therefore $T(x)\succ_{gw}T(y)$, $\forall
y\in\mathbb{F}^n$.
 If $T(x)\in span\{e\}$ then T=0, a contradiction, so
$T(x)\notin span\{e\}$.

Conversely, let $x\notin span\{e\}$ implies that $T(x)\notin
span\{e\}$. If $x\succ_{gw}y$ then we have two cases:

 Case 1; Let $x\in span\{e\}$, then  x=y and hence
T(x)=T(y).

  Case 2; Let $x\notin span\{e\}$, then $T(x)\notin
span\{e\}$ by hypostasis, so by Lemma \ref{lem1},
$T(x)\succ_{gw}Z$, $\forall Z\in\mathbb{F}^n$ and hence
$T(x)\succ_{gw}T(y)$. Then T preserves  gw-majorization.
\end{proof}

\begin{theorem}\label{preserve}
Let $T:\mathbb{F}^n \rightarrow \mathbb{F}^n$ be a linear
operator. Then T preserves gw-majorization if and only if $
T(x)=\alpha Rx $ for some $R\in \mathbf{M}_n$  and
$\alpha\in\mathbb{F}$, such that either  $ker(R)=span\{e\}$ and
$e\notin Im(R)$ or $R\in \mathbf{GR}_n$ is invertible.
\end{theorem}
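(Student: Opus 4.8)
The plan is to reduce the whole statement to Lemma~\ref{lem2}, which for a nonzero operator recasts preservation of gw-majorization as the purely linear-algebraic condition that $T(x)\in span\{e\}$ forces $x\in span\{e\}$; equivalently, $T^{-1}(span\{e\})\subseteq span\{e\}$, where $T^{-1}$ denotes the preimage. Writing $L:=span\{e\}$, a one-dimensional subspace of $\mathbb{F}^n$, the entire characterization then amounts to analysing this single containment, with a case split according to whether $T$ is invertible.

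For the forward implication I would first dispose of the trivial case $T=0$, which preserves gw-majorization and is realized by $\alpha=0$ together with any $R$ of the first type. Assuming $T\neq0$ and invoking Lemma~\ref{lem2}, I split on invertibility. If $T$ is invertible, then $T^{-1}(L)$ is one-dimensional and contained in $L$, hence equal to $L$, so $Te=\lambda e$ for some scalar $\lambda$, and $\lambda\neq0$ because $e\notin ker(T)$; setting $\alpha=\lambda$ and $R=\lambda^{-1}T$ gives $Re=e$ with $R$ invertible, which is the second alternative. If $T$ is not invertible, then $\{0\}\neq ker(T)\subseteq T^{-1}(L)\subseteq L$, and since $\dim L=1$ this forces $ker(T)=span\{e\}$; I would then deduce $e\notin Im(T)$ by observing that $Tv=e$ would place $v\in T^{-1}(L)\subseteq span\{e\}$, whence $Tv\in span\{Te\}=\{0\}$, a contradiction. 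Taking $\alpha=1$ and $R=T$ yields the first alternative.

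For the converse I would again appeal to Lemma~\ref{lem2} and verify that each prescribed form satisfies its hypothesis, the degenerate value $\alpha=0$ giving $T=0$, which preserves trivially. When $R\in\mathbf{GR}_n$ is invertible and $\alpha\neq0$, Remark~\ref{close} gives $R^{-1}e=e$, so $Rx=\mu e$ implies $x=\mu R^{-1}e=\mu e\in span\{e\}$; contrapositively, $x\notin span\{e\}$ yields $T(x)\notin span\{e\}$. When $ker(R)=span\{e\}$ and $e\notin Im(R)$ with $\alpha\neq0$, the equation $Rx=\mu e$ forces $\mu=0$ (otherwise $e\in Im(R)$) and hence $x\in ker(R)=span\{e\}$, so once more the required implication holds, and $T=\alpha R\neq0$ so Lemma~\ref{lem2} indeed applies.

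The argument is short once Lemmas~\ref{lem1} and~\ref{lem2} are available, so the real work is organizational: carrying out the invertible/non-invertible dichotomy cleanly and recognizing that the two structural alternatives are precisely the images of these two cases. The step I expect to demand the most care is the non-invertible branch of the forward direction, where both $ker(R)=span\{e\}$ and $e\notin Im(R)$ must be extracted from the single containment $T^{-1}(L)\subseteq L$, together with checking that the degenerate values $\alpha=0$ and $T=0$ remain consistent with the stated normal form.
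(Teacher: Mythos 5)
Your proposal is correct and follows essentially the same route as the paper: both reduce the theorem to Lemma~\ref{lem2} and split on whether $T$ is invertible, with the invertible case yielding $Te=\lambda e$ (hence $R=\lambda^{-1}T\in\mathbf{GR}_n$) and the singular case yielding $ker(T)=span\{e\}$ and $e\notin Im(T)$. You merely spell out details the paper compresses, notably the extraction of both properties in the singular case from the containment $T^{-1}(span\{e\})\subseteq span\{e\}$ and the verification of the converse, which the paper dismisses as trivial.
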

\begin{proof}
 If T =0, we put $\alpha=0$. Let $T\neq 0$ and A be the matrix representation of T with
 respect to the standard basis of $\mathbb{F}^n$. Now, we consider two
cases:

 Case 1; Let T be invertible. Then there exits
$b\in\mathbb{F}^n$ such that Ab=e. So b=re, for some
$r\in\mathbb{F}$, by Lemma $\ref{lem2}$. Then $Ae=\frac{1}{r}e$,
therefore $T(x)= \alpha R x$, where $\alpha=\frac{1}{r}$ and
$R=(rA)\in\textbf{GR}_n$ is invertible.

 Case 2; Let T be singular.
Then by Lemma $\ref{lem2}$, ker(T)= span$\{e\}$ and $e\notin
Im(T)$. So ker(A)=span$\{e\}$ and $e\notin Im(A)$. The converse is
trivial.
\end{proof}

Now, we state the following two Lemmas to prove the main Theorem
of this paper.
 \begin{lemma}\label{ker A=e}
 Let $A\in \textbf{M}_n$ be such that ker(A)=$span\{e\}$. Then there exist
 $x_0,y_0\in\mathbb{F}^n$ and $R_0\in\mathbf{GR}_n$ such that
 $x_0+Ay_0$ doesn't gw-majorize $R_0x_0+AR_0y_0$.
\end{lemma}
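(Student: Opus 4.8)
The plan is to reduce the statement to a commutativity question about $A$. Recall that for $u\in\mathbb{F}^n$ we have $u\succ_{gw}w$ precisely when $w=Ru$ for some $R\in\mathbf{GR}_n$; in particular, taking $u=0$ gives $Ru=0$ for every $R\in\mathbf{GR}_n$, so $0$ gw-majorizes only the zero vector. I would therefore try to choose $x_0,y_0$ so that $x_0+Ay_0=0$ while $R_0x_0+AR_0y_0\neq 0$. Setting $x_0=-Ay_0$ forces $x_0+Ay_0=0$, and then $R_0x_0+AR_0y_0=-R_0Ay_0+AR_0y_0=(AR_0-R_0A)y_0$. Hence it suffices to produce $R_0\in\mathbf{GR}_n$ and $y_0\in\mathbb{F}^n$ with $(AR_0-R_0A)y_0\neq 0$, i.e.\ an $R_0\in\mathbf{GR}_n$ that does not commute with $A$.

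The key step is thus to show that $A$ fails to commute with some g-row stochastic matrix. Since $\ker(A)=span\{e\}$ is one-dimensional, $A$ is singular (so $A\neq cI$ for $c\neq0$) and, for $n\geq2$, $A\neq0$ (otherwise $\ker(A)=\mathbb{F}^n$); consequently $A$ is not a scalar multiple of the identity. I claim that a non-scalar matrix cannot commute with every element of $\mathbf{GR}_n$. Indeed, $\mathbf{GR}_n=I+\{S\in\mathbf{M}_n: Se=0\}$, so $A$ commutes with all of $\mathbf{GR}_n$ if and only if it commutes with every $S$ satisfying $Se=0$. Testing this against the zero-row-sum matrices $S=E_{pq}-E_{pr}$ forces first that $A$ is diagonal and then that all of its diagonal entries coincide, i.e.\ $A=cI$, contradicting that $A$ is non-scalar. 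Therefore some $R_0\in\mathbf{GR}_n$ satisfies $AR_0\neq R_0A$, and choosing $y_0$ outside $\ker(AR_0-R_0A)$ completes the construction.

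Assembling the pieces: with $y_0\notin\ker(AR_0-R_0A)$, $x_0=-Ay_0$, and this $R_0$, the vector $x_0+Ay_0$ equals $0$, which gw-majorizes only $0$, whereas $R_0x_0+AR_0y_0=(AR_0-R_0A)y_0\neq0$; hence $x_0+Ay_0$ does not gw-majorize $R_0x_0+AR_0y_0$, as required.

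I expect the main obstacle to be the centralizer computation, namely proving that only scalar matrices commute with every g-row stochastic matrix. It is worth stressing that restricting $R_0$ to permutation matrices would not suffice: the matrix $A=b(J-nI)$ with $b\neq0$ also satisfies $\ker(A)=span\{e\}$ yet commutes with every permutation (since $Pe=e$ and $P^{t}e=e$ give $PJ=J=JP$), so the full class $\mathbf{GR}_n$ is genuinely needed in order to locate a non-commuting $R_0$.
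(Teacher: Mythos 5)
Your proposal is correct, but it takes a genuinely different route from the paper's. You argue directly: setting $x_0=-Ay_0$ makes the left-hand vector $0$, which gw-majorizes only $0$, so the lemma reduces to finding $R_0\in\mathbf{GR}_n$ with $AR_0\neq R_0A$; you then show, via the identification $\mathbf{GR}_n=I+\{S\in\mathbf{M}_n:Se=0\}$ and the test matrices $S=E_{pq}-E_{pr}$, that only scalar matrices commute with all of $\mathbf{GR}_n$, while $\ker(A)=\mathrm{span}\{e\}$ forces $A$ to be singular yet nonzero (for $n\geq 2$), hence non-scalar. I checked the centralizer computation: equating $(Ae_p)(e_q-e_r)^t$ with $e_p(e_q^tA-e_r^tA)$ column by column does give $Ae_p\in \mathrm{span}\{e_p\}$ for every $p$ and then equality of the diagonal entries, including in the case $n=2$ where $p\in\{q,r\}$ is unavoidable, so the sketch is sound. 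The paper instead argues by contradiction with a case split on whether $e\in \mathrm{Im}(A)$: if $e=Ay_0$ it substitutes $x=0$, $y=y_0$; otherwise it uses $\mathbb{F}^n=\mathrm{Im}(A)\oplus \mathrm{span}\{e\}$, writes $e_i=Ay_i+r_ie$, chooses $x=e-(e_i-r_ie)$, $y=y_i$ so that the left side becomes $e$ (which gw-majorizes only $e$), and kills terms with the rank-one matrices $R_j=ee_j^t$ to force $r_i=0$ and $\mathrm{Im}(A)=\mathbb{F}^n$, a contradiction. Your reduction buys more than the paper's: it proves the conclusion for every non-scalar $A$ (the kernel hypothesis enters only to exclude $A\in\mathbb{F}I$), and it isolates the commutator mechanism that the paper reuses separately in Lemma \ref{RA=AR}, where your centralizer computation is exactly the unproved ``it is easy to show that $A=I$'' step, so one computation could serve both lemmas; the paper's substitutions, by contrast, stay inside the majorization relation and are tailored to the kernel hypothesis. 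Note that both arguments (and the statement itself) implicitly assume $n\geq2$, since for $n=1$ the hypothesis forces $A=0$ and $\mathbf{GR}_1=\{1\}$; and your closing remark that permutations would not suffice, witnessed by $A=b(J-nI)$, is correct and shows the full class $\mathbf{GR}_n$ is genuinely needed.
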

\begin{proof}
Assume if possible,
\begin{eqnarray}\label{1}
x+Ay\succ_{gw}Rx+ARy, \forall x,y\in \mathbb{F}^n, \forall
R\in\mathbf{GR}_n \ \ .
\end{eqnarray}
Now, we consider two cases:

 Case 1; Let $e\in Im(A)$, then there
exists $y_0\in\mathbb{F}^n$, such that $Ay_0=e$. Put $x=0,y=y_0$
in $(\ref{1})$ then $ARy_0=e,\forall R\in\mathbf{GR}_n$, a
contradiction .

Case 2; Let $e\notin Im(A)$, then $\mathbb{F}^n=Im(A)\bigoplus
span\{e\}$. So for every i ($1\leq i\leq n$), there exist
$y_i\in\mathbb{F}^n$ and $r_i\in\mathbb{F}$ such that
$e_i=Ay_i+r_ie$. Put $x=e-(e_i- r_ie)$ and $y=y_i$ in $(\ref{1})$,
then
\begin{eqnarray}\label{2}
r_ie-Re_i+ARy_i=0, \ \forall R\in\mathbf{GR}_n \ \ .
\end{eqnarray}
For every j ($1\leq j\leq n , j\neq i$) put $R_j=ee^t_j$ in
$(\ref{2})$, then $r_i=0$, for every i ($1\leq i\leq n$).
Therefore $Ay_i=e_i$, for every i ($1\leq i\leq n$), then
$Im(A)=\mathbb{F}^n$, a contradiction.
\end{proof}

\begin{lemma}\label{RA=AR}
Let A$\in \mathbf{GR}_n$ be invertible. Then the following
conditions are equivalent:

(a) A=I

 (b) $(x+Ay)\succ_{gw}(Rx+ARy)$, $\forall R\in
$$\mathbf{GR}_n$ and $\forall x,y\in \mathbb{F}^n$ .
\end{lemma}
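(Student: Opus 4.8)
The implication (a) $\Rightarrow$ (b) is immediate and I would dispose of it first: if $A=I$ then $Rx+ARy=R(x+y)$, and since $R\in\mathbf{GR}_n$ the definition of gw-majorization gives $(x+y)\succ_{gw}R(x+y)$ directly. All the content lies in the converse, and my plan is to extract from (b) a purely algebraic commutativity relation and then show that no invertible g-row stochastic matrix other than $I$ can satisfy it.

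For the reduction I would exploit the fact that, by Lemma \ref{lem1}, any vector outside $span\{e\}$ gw-majorizes everything and hence carries no information; so (b) is restrictive only when the majorizing vector $x+Ay$ lies in $span\{e\}$. The cleanest specialization is to take $x=-Ay$, making $x+Ay=0$. Since $0\succ_{gw}w$ forces $w=S0=0$ for the corresponding g-row stochastic $S$, condition (b) then yields $Rx+ARy=0$, i.e. $(AR-RA)y=0$. As $y\in\mathbb{F}^n$ and $R\in\mathbf{GR}_n$ are both arbitrary, this gives $AR=RA$ for every g-row stochastic $R$.

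It then remains to prove that an invertible $A\in\mathbf{GR}_n$ commuting with all of $\mathbf{GR}_n$ must equal $I$. Writing each g-row stochastic matrix as $I+N$ with $Ne=0$, commuting with $\mathbf{GR}_n$ is equivalent to commuting with every $N$ satisfying $Ne=0$. I would test this on the rank-one matrices $N=e_i(e_j-e_k)^t$ with $j\neq k$, each of which satisfies $Ne=0$. Expanding $AN=NA$ entrywise, the equations coming from rows $p\neq i$ force $A_{pi}=0$, so $A$ is diagonal; the remaining diagonal equations then force all diagonal entries to coincide, so $A=cI$; and finally $Ae=e$ gives $c=1$, hence $A=I$.

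The reduction step is slick once one notices that the only useful value of $x+Ay$ is $0$, so I expect the main care to be needed in the commutativity computation: choosing the test matrices $e_i(e_j-e_k)^t$ and reading off first diagonality and then scalarity from $AN=NA$. This bookkeeping is routine, but it is the step that actually pins down $A=I$ rather than merely $A=cI$, since scalarity alone is only cut to the identity by the g-row stochastic normalization $Ae=e$.
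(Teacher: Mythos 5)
Your proof is correct, but it takes a genuinely different — and cleaner — route to the commutativity relation than the paper does. The paper exploits invertibility of $A$ to pick, for each $i$, a vector $y_i$ with $Ay_i=e-e_i$, so that the majorizing vector in (b) becomes $e$; since $e\succ_{gw}w$ forces $w=Se=e$, this yields $Re_i+ARy_i=e$ for all $R\in\mathbf{GR}_n$, and the commutator is then extracted by the substitution $R\mapsto R[J-(n-1)A]$ (legitimate because $[J-(n-1)A]e=ne-(n-1)e=e$), giving $(RA-AR)e_i=0$ for each $i$. Your specialization $x=-Ay$ reaches the same relation $AR=RA$ in one line, via $0\succ_{gw}w\Rightarrow w=S0=0$, bypassing both the construction of the $y_i$ and the $J-(n-1)A$ trick; notably, it never uses invertibility of $A$, so your argument in fact proves (b)$\Rightarrow$(a) for arbitrary $A\in\mathbf{GR}_n$, with invertibility following a posteriori from $A=I$. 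For the last step the paper merely asserts ``it is easy to show that $A=I$,'' whereas you supply the actual computation: identifying $\mathbf{GR}_n$ with $I+\{N:Ne=0\}$ and testing against the rank-one matrices $N=e_i(e_j-e_k)^t$ with $j\neq k$ correctly forces first $A_{pi}=0$ for $p\neq i$ (diagonality), then equality of the diagonal entries, then $c=1$ from $Ae=e$. The only caveat is that choosing $j\neq k$ requires $n\geq 2$; for $n=1$ the lemma is trivial since $\mathbf{GR}_1=\{1\}$. In short, the paper's proof stays within one identity and a one-parameter family of substituted matrices $R[J-(n-1)A]$, while yours is shorter, more elementary, slightly more general, and makes explicit the commutant computation the paper leaves to the reader.
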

\begin{proof}
 It is clear that, (a) implies (b).
Conversely, let (b) holds. The matrix A is invertible, then for
every i ($1\leq i\leq n$) there exists $y_i\in \mathbb{F}^n$ such
that $Ay_i=e-e_i$ . By hypostasis $(e_i+Ay_i)\succ_{gw}
(Re_i+ARy_i)$, $\forall  R\in \mathbf{GR}_n$, then
\begin{eqnarray} \label{3.5}
(Re_i+ARy_i)=e, \ \forall R\in \mathbf{GR}_n\ \ .
\end{eqnarray}
For every $R\in \mathbf{GR}_n$, it is clear that  $R[J-(n-1)A]\in
\mathbf{GR}_n$,  therefore by (\ref{3.5}),
\begin{eqnarray} \nonumber
  R[J-(n-1)A]e_i+AR[J-(n-1)A]y_i=e& \Rightarrow &
  (RA-AR)e_i=0,\ \forall \ i \in \{1,..., n\}\\
    \nonumber & \Rightarrow & AR=RA,\forall \ R\in \mathbf{GR}_n.
     \end{eqnarray}
So it is easy to show that A=I.
 \end{proof}

 Now, we state the main Theorem of this paper.
\begin{theorem}\label{main}
Let $T:\mathbf{M}_{n,m}\rightarrow \mathbf{M}_{n,m}$ be a linear
operator. Then T strongly preserves gw-majorization if and only if
$T(X)=AXB$ for every $X\in\mathbf{M}_{n,m}$ , where
$A\in\mathbf{GR}_n$ and $B\in \textbf{M}_m$ are invertible .
\end{theorem}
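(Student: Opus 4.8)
The plan is to prove the two directions separately, with the reverse (structural) direction carrying essentially all of the work. For the easy direction, suppose $T(X)=AXB$ with $A\in\mathbf{GR}_n$ invertible and $B\in\mathbf{M}_m$ invertible. I would invoke the four equivalent conditions stated just before Lemma~\ref{lem1}: taking the two g-row stochastic matrices there both equal to $A$ gives $X\succ_{gw}Y\iff AX\succ_{gw}AY$ (the equivalence of (1) and (2)), and taking $C=B$ gives $AX\succ_{gw}AY\iff AXB\succ_{gw}AYB$ (the equivalence of (1) and (4), applied to the pair $AX,AY$). Composing these two equivalences yields $X\succ_{gw}Y\iff T(X)\succ_{gw}T(Y)$, so $T$ strongly preserves gw-majorization. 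There is no obstacle here.

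For the converse, assume $T$ strongly preserves gw-majorization. By Proposition~\ref{inv} it is invertible, and $T^{-1}$ then also strongly preserves gw-majorization. The key is an intrinsic, relation-theoretic description of the rank-one matrices with a fixed column pattern. For $c\in\mathbb{F}^m$ set $W_c=\{xc^t:x\in\mathbb{F}^n\}$. Using Lemma~\ref{lem1} I would show that for $X=xc^t$ with $x\notin\mathrm{span}\{e\}$ and $c\ne0$ the down-set $\{Y:X\succ_{gw}Y\}$ equals $W_c$, an $n$-dimensional linear subspace, whereas for a general $X'$ the set $\{Y:X'\succ_{gw}Y\}=\{RX':R\in\mathbf{GR}_n\}$ is a coset that is a linear subspace of dimension exactly $n$ precisely when $X'$ is such a generic rank-one matrix. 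Since $T$ and $T^{-1}$ are linear bijections intertwining $\succ_{gw}$, they preserve this purely relational property, and a short connectedness argument then shows that $T$ carries each pattern space $W_c$ onto a single pattern space $W_{\phi(c)}$.

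From $T(W_c)=W_{\phi(c)}$ and the linearity of $T$ I would extract an invertible linear map on the pattern space $\mathbb{F}^m$, which produces the matrix $B\in\mathbf{M}_m$, together with, for each $c$, a linear self-map $x\mapsto A_c x$ of $\mathbb{F}^n$ describing the action of $T$ along $W_c$. Restricting $\succ_{gw}$ to a single $W_c$ reduces to vector gw-majorization on $\mathbb{F}^n$, so each $A_c$ preserves gw-majorization there; by Theorem~\ref{preserve}, $A_c=\alpha_c R_c$ with $R_c\in\mathbf{GR}_n$, and the singular alternative of that theorem is excluded because $T$ is invertible—here Lemma~\ref{ker A=e} is what rules out the case $\ker(A_c)=\mathrm{span}\{e\}$, which would otherwise produce a violation of strong preservation.

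The main obstacle, and the final step, is to show that the $R_c$ can be taken independent of $c$; this is exactly what Lemmas~\ref{ker A=e} and \ref{RA=AR} are engineered for. I would test strong preservation on two-column matrices $X=xc_1^t+yc_2^t$: applying $T$ gives $(A_{c_1}x)\phi(c_1)^t+(A_{c_2}y)\phi(c_2)^t$, and comparing the relation $X\succ_{gw}RX$ with its image forces, after absorbing the scalars $\alpha_c$ into $B$ and setting $A=R_{c_2}^{-1}R_{c_1}$ (a g-row stochastic invertible matrix by Remark~\ref{close}), an identity of the shape $x+Ay\succ_{gw}Rx+ARy$ for all $R\in\mathbf{GR}_n$ and all $x,y\in\mathbb{F}^n$. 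Lemma~\ref{RA=AR} then gives $A=I$, i.e.\ $R_{c_1}=R_{c_2}$, so a single invertible g-row stochastic matrix $A$ serves for every pattern. Assembling this common left factor $A$ with the pattern map $B$ yields $T(X)=AXB$ on all rank-one matrices, and since these span $\mathbf{M}_{n,m}$, the identity extends by linearity to all $X$, completing the proof.
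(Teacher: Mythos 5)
Your proposal is correct, and while it ends in the same place as the paper, its first half takes a genuinely different route. The paper decomposes $T$ columnwise into the $m^2$ component maps $T_i^j=E_iTE^j$ on $\mathbb{F}^n$, applies Theorem~\ref{preserve} to each, and then needs two bookkeeping steps: aligning the matrices $A_p^j$ within each output column (this is where Lemma~\ref{ker A=e} genuinely earns its keep, since an individual component $T_p^j$ may a priori be singular even though $T$ is invertible), and excluding the singular branch of Theorem~\ref{preserve} via the explicit kernel element $U=[\lambda_1e|\cdots|\lambda_me]$ contradicting Proposition~\ref{inv}. You instead characterize generic rank-one matrices intrinsically, and your key claim does check out: for $\mathrm{rank}\,X=r$, the down-set $\{RX:R\in\mathbf{GR}_n\}$ is a linear subspace of dimension $nr$ when $e\notin\mathrm{Im}\,X$, and a coset of dimension $n(r-1)$ missing the origin when $e\in\mathrm{Im}\,X$; hence ``down-set is an $n$-dimensional linear subspace'' singles out exactly the matrices $xc^t$ with $x\notin\mathrm{span}\{e\}$ --- and your subspace clause is essential, since for $r=2$ with $e\in\mathrm{Im}\,X$ the coset has dimension exactly $n$. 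Since $T$ transports down-sets, $T(W_c)=W_{\phi(c)}$ follows immediately (no connectedness argument is actually needed), and --- this is what your route buys --- each $A_c$ is automatically invertible because $T|_{W_c}$ is a bijection onto $W_{\phi(c)}$, so the singular branch of Theorem~\ref{preserve} and Lemma~\ref{ker A=e} are bypassed entirely; your aside attributing the exclusion of $\ker(A_c)=\mathrm{span}\{e\}$ to Lemma~\ref{ker A=e} is a harmless misattribution within your own argument. The price is work the paper's componentwise route avoids: you must actually prove the dimension/coset classification above, show that independent $c_1,c_2$ give independent $\phi(c_1),\phi(c_2)$ (via injectivity of $T$ on the distinct $n$-dimensional spaces $W_{c_1},W_{c_2}$) before decoupling the two columns with equivalence (4), and note that the factorization $T(xc^t)=(A_cx)\phi(c)^t$ is determined only up to reciprocal scalars, so the linear map $B$ (i.e.\ linearity of $\psi(c)=\alpha_c\phi(c)$) can be assembled only \emph{after} the common left factor $A$ is fixed by the Lemma~\ref{RA=AR} step, slightly later than your narrative suggests. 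From that point on the two proofs coincide: your two-column test producing $x+Ay\succ_{gw}Rx+ARy$ with $A=R_{c_1}^{-1}R_{c_2}$ is exactly the paper's Step 3 with the matrices $B_{x,y}$, Lemma~\ref{RA=AR} finishes, and the easy direction and the extension by linearity over the spanning rank-one matrices are identical; you should, as the paper does, dispatch $m=1$ separately by Theorem~\ref{preserve}, since your alignment step needs two independent patterns.
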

 \begin{proof}
 If m=1, the result is implied by Theorem $\ref{preserve}$, so let $m\geq2$.
 Define the embedding
$E^j$:$\mathbb{F}^n\rightarrow \mathbf{M}_{n,m}$ by
$E^j(x)=xe_j^t$ and projection $E_i$ :$
\mathbf{M}_{n,m}\rightarrow \mathbb{F}^n$ by $E_i(X)=Xe_i$ for
every $i,j \in \{1,..., m\}$. Put $T_i^j=E_i TE^j$ and let
$X=\textbf{[}x_1|\cdots|x_m\textbf{]}\in \mathbf{M}_{n,m}$ where
$x_i$ is the $i^{th}$ column of X . Then,
\begin{eqnarray}\nonumber
T(X)=T(\textbf{[}x_1|\cdots|x_m\textbf{]})=\textbf{[}\sum_{j=1}^m
T_1^j(x_j)|\cdots|\sum_{j=1}^mT_m^j(x_j)\textbf{]}.
\end{eqnarray}
It is easy to show that
$T_i^j$:$\mathbb{F}^n\rightarrow\mathbb{F}^n$ preserves
gw-majorizaton. Then by Theorem \ref{preserve}, there exist
$\alpha_i^j\in\mathbb{F}$, and $A_i^j\in \textbf{M}_n$ such that
$T_i^j(x)=\alpha_i^j A_i^j x$ where either $A_i^j\in\mathbf{GR}_n$
is invertible or $ker(A_i^j)= span\{e\}$ and $e\notin Im(A_i^j)$.
Then,
\begin{eqnarray}\label{0}
T(X)=\textbf{[}\sum_{j=1}^m \alpha_i^j A_i^j x_j
|\cdots|\sum_{j=1}^m \alpha_m^j A_m^j x_j\textbf{]}.
\end{eqnarray}
Now, we consider three steps for the proof.

Step 1. In this step we will to show that, if there exist p and q
($1\leq p,q\leq m$) such that $\alpha_p^q\neq0$ and $
A_p^q\in\mathbf{GR}_n$ is invertible, then for every j ($1\leq
j\leq m)$, $A_p^j=A_p^q$. If $\alpha_p^j=0$, without lose of
generality
 we can choose $A_p^j=A_p^q$ .
Let $\alpha_p^j\neq0$. For every $x,y\in\mathbb{F}^n$, put
$X=xe_q^t+ye_j^t$, then $T(X)\succ_{gw}T(RX),\forall
R\in\mathbf{GR}_n$ and hence by (\ref{0}),
\begin{eqnarray}\nonumber
\alpha_p^qA_p^qx+\alpha_p^jA_p^jy\succ_{gw}\alpha_p^qA_p^qRx+\alpha_p^jA_p^jRy
, \forall x,y\in\mathbb{F}^n , \forall R\in\mathbf{GR}_n&\Rightarrow&\\
 \nonumber\\
 \nonumber
x+(A_p^q)^{-1}A_p^j(\frac{\alpha_p^j}{\alpha_p^q}y)\succ_{gw}Rx+(A_p^q)^{-1}A_p^jR(\frac{\alpha_p^j}{\alpha_p^q}y)
,\forall x,y\in\mathbb{F}^n , \forall R\in\mathbf{GR}_n&\Rightarrow&\\
\nonumber
\\
\nonumber x+(A_p^q)^{-1}A_p^jy\succ_{gw}Rx+(A_p^q)^{-1}A_p^jRy
,\forall x,y\in\mathbb{F}^n , \forall R\in\mathbf{GR}_n.
\end{eqnarray}
So by Lemma \ref{ker A=e}, $A_p^j$ is invertible and hence by
Lemma \ref{RA=AR}, $A_p^j=A_p^q$. Set $A_p=A_p^q$, then
 \begin{eqnarray}\nonumber
 T(X)=\textbf{[}\sum_{j=1}^m \alpha_1^j
A_1^jx_j|\cdots|A_p\sum_{j=1}^m\alpha_p^jx_j|\cdots|\sum_{j=1}^m\alpha_m^jA_m^jx_j\textbf{]}.
\end{eqnarray}

Step 2. In this step we will to show that for every i and j
($1\leq i,j\leq m$), $A_i^j\in\mathbf{GR}_n$ is invertible if
$\alpha_i^j\neq0$. Assume if possible there exist r and s ($1\leq
r,s\leq m)$, such that $ker (A_r^s)=span\{e\}$ and
$\alpha_r^s\neq0$. Without lose of generality we can assume that
r=m, then by step 1, for every $1\leq j\leq m$,
$ker(A_m^j)=span\{e\}$. Now, we construct a non zero $n\times m$
matrix U, such that T(U)=0. Consider the vectors:
\begin{eqnarray}\nonumber
b_1=\left(%
\begin{array}{ccc}
 \alpha_1^1 \\
 \vdots\\
 \alpha^1_{m-1} \\
\end{array}%
\right) ,\cdots,
b_m=\left(%
\begin{array}{ccc}
  \alpha_1^m \\
  \vdots\\
  \alpha^m_{m-1} \\
\end{array}%
\right) \in \mathbb{F}^{m-1}.
\end{eqnarray}
 It is clear that
$\{b_1,\cdots,b_m\}$ is a linearly dependent set in
$\mathbb{F}^{m-1}$, so there exist (not all zero)
$\lambda_1,\cdots,\lambda_m\in\mathbb{F}$, such that
\begin{eqnarray}\nonumber
 \sum_{j=1}^m\lambda_j\alpha_i^j=0\ ,\ \forall \ i\in\{1,..., m-1\} .
\end{eqnarray}
 Now, define
$U:=\textbf{[}\lambda_1e|\cdots|\lambda_me\textbf{]}\in\mathbf{M}_{n,m}$.
It is clear that, $U\neq0$ and
\begin{eqnarray}\nonumber
T(U)=\textbf{[}\sum_{j=1}^m\lambda_j\alpha_1^jA_1^je|\cdots|\sum_{j=1}^m\lambda_j\alpha_m^jA_m^je\textbf{]}.
\end{eqnarray}
We will show that T(U)=0. Since $ker(A_m^j)=span\{e\}$, it is
clear that $\sum_{j=1}^m\lambda_j\alpha_m^jA_m^je$\\=0 and hence
the last column of T(U) is zero. Now, for every k ($1\leq k\leq
m-1$), we consider the $k^{th}$ column of T(U):

Case 1; Let $\alpha_k^l\neq0$ and $A_k^l\in\mathbf{GR}_n$ be
invertible for some $l$ ($1\leq l\leq m$), then by step 1 ,
\begin{eqnarray}\nonumber
 \sum_{j=1}^m\lambda_j\alpha_k^jA_k^je=A_k^l(\sum_{j=1}^m\lambda_j\alpha_k^j)e=0 .
\end{eqnarray}

 Case 2; Let for every j ($1\leq j\leq m$), $A_k^j$ be non
invertible, then $ker(A_k^j)=span\{e\}$, so
$\sum_{j=1}^m\lambda_j\alpha_k^jA_k^je=0$. Therefor T(U)=0, a
contradiction. So by step 1 there exist invertible matrices $A_i
\in \mathbf{GR}_n$ $(1\leq i\leq m)$ such that
$T(X)=T\textbf{[}x_1|\cdots|x_m\textbf{]}
 \\ =\textbf{[}A_1Xa_1|\cdots|A_mXa_m\textbf{]}$, where
$a_i=(\alpha_i^1,\cdots,\alpha_i^m)^t$, for every i ($1\leq i\leq
m$)
 .

Step 3. In this step we will to show that $A_i=A_1$ , for all
$1\leq i\leq m$. Now, we show that
$rank\textbf{[}a_1|...|a_m\textbf{]}\geq 2$. Assume if possible,
$\{a_1,...,a_m\}\subseteq span\{a\}$, for some $a\in\mathbb{F}^m$.
Since $m\geq 2$, then we choose $
b\in(span\{a\})^\bot\setminus\{0\}$. Define $X_0:=e_1b^t\in
\mathbf{M}_{n,m}$.
 It is clear that $X_0\neq0$ and $T(X_0)=0$, a contradiction and
 hence
$rank\textbf{[}a_1|...|a_m\textbf{]}\geq 2$ .
 Without lose of generality we can assume that $\{a_1,a_2\}$ is a linearly
independent set. Let $X\in \mathbf{M}_{n,m}$ and $R\in
\mathbf{GR}_n$ be arbitrary, then
 \begin{eqnarray} \label{gg} \nonumber
 X\succ_{gw}RX &\Rightarrow& T(X)\succ_{gw}T(RX)
  \\ \nonumber &\Rightarrow&
  \textbf{[}A_1X a_1|...|A_mX a_m\textbf{]}\succ_{gw}\textbf{[}A_1 RX a_1|...|A_m RX a_m\textbf{]}
\\ \nonumber &\Rightarrow&
   A_1X a_1+A_2X a_2\succ_{gw}A_1 RX a_1+A_2 RX a_2
 \\ &\Rightarrow&
 Xa_1+(A_1^{-1} A_2)X a_2\succ_{gw}RXa_1+(A_1^{-1} A_2) RX a_2 \ \ .
     \end{eqnarray}

 Since $\{a_1,a_2\}$ is linearly independent, then for every
 $x,y \in \mathbb{F}^n$, there exits $B_{x,y}\in \mathbf{M}_{n,m}$
 such that, $B_{x,y}\ a_1=x$,\ \ \ $B_{x,y} \ a_2=y$, put
 $X=B_{x,y}$ in (\ref{gg}) thus,
 \begin{eqnarray} \nonumber
  B_{x,y} \ a_1+(A_1^{-1}\  A_2 )\ B_{x,y}\  a_2 &\succ_{gw}& RB_{x,y}
  \ a_2+(A_1^{-1}\  A_2) \ RB_{x,y}\
 a_2 \Rightarrow \\x+(A_1^{-1} A_2) \ y &\succ_{gw}&\nonumber Rx +(A_1^{-1}
 A_2)Ry\ , \forall R\in\mathbf{GR}_n \ .
 \end{eqnarray}
 Then by Lemma $\ref{RA=AR}$, \ $A_1^{-1} A_2=I$ and hence
  $A_2=A_1$ .
 For every i ($3\leq i\leq m$), if $a_i=0$  we
 can replace  $A_i$ by $ A_1$. If $a_i\neq 0$, then
 $\{a_1,a_i\}$ or $\{a_2,a_i\}$ is a linearly independent set. By the
 same method as above,
 $A_i=A_1 $ or $A_i=A_2$.
  Let $ A=A_1$ and hence $A_i=A$ for
 every i ($1\leq i\leq m$)  . Therefore,
 \begin{eqnarray} \nonumber
 T(X)=\textbf{[} AXa_1 \  | \cdots | AXa_m\textbf{]}= AXB
 ,\end{eqnarray} where $B=\textbf{[}  a_1|  \cdots |\ a_m \textbf{]}$ is an
 invertible matrix in $\textbf{M}_m$.

 Conversely, if T(X)=AXB where $A\in\mathbf{GR}_n$ and $B\in
\textbf{M}_m$ are invertible matrices, it is trivial that T
strongly preserves gw-majorization.
\end{proof}

The following statement shows that every strong linear preserver
of matrix majorization is an strong linear preserver of
gw-majorization but the converse is false.
\begin{proposition} \label{implygw}
Let $T:\mathbf{M}_{n,m}\rightarrow \mathbf{M}_{n,m}$ be a linear
operator that strongly preserves matrix majorization.
 Then T strongly preserves gw-majorization .
 \end{proposition}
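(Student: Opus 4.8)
The plan is to reduce the whole statement to one elementary linear-algebra fact: every $g$-row stochastic matrix is an \emph{affine} combination of genuine (nonnegative) row stochastic matrices. So the first thing I would establish is the lemma that for each $R\in\mathbf{GR}_n$ there exist finitely many row stochastic matrices $S_1,\dots,S_k\in\mathbf{R}_n$ and scalars $c_1,\dots,c_k\in\mathbb{F}$ with $\sum_t c_t=1$ and $R=\sum_t c_tS_t$. Over $\mathbb{R}$ this is immediate: $\mathbf{R}_n$ is, row by row, a product of standard simplices, and the affine hull of a standard simplex is the full hyperplane of vectors with coordinate sum $1$; hence the affine hull of $\mathbf{R}_n$ is exactly the set of matrices with all row sums equal to $1$, namely $\mathbf{GR}_n$.

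With this lemma available, the core step is to show that \emph{any} linear operator $T$ which merely preserves matrix majorization already preserves gw-majorization. Suppose $X\succ_{gw}Y$, say $Y=RX$ with $R\in\mathbf{GR}_n$, and write $R=\sum_t c_tS_t$ as in the lemma. Each $S_tX$ is matrix-majorized by $X$, so by hypothesis $T(S_tX)$ is matrix-majorized by $T(X)$, say $T(S_tX)=R_t\,T(X)$ with $R_t\in\mathbf{R}_n$. By linearity,
\begin{eqnarray}\nonumber
T(Y)&=&T\Big(\sum_t c_tS_tX\Big)=\sum_t c_t\,T(S_tX)=\Big(\sum_t c_tR_t\Big)T(X),
\end{eqnarray}
and $\sum_t c_tR_t\in\mathbf{GR}_n$, since each $R_te=e$ and $\sum_t c_t=1$. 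Hence $T(X)\succ_{gw}T(Y)$, so $T$ preserves gw-majorization.

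Finally I would promote ``preserves'' to ``strongly preserves'' by the standard device for invertible operators. First, $T$ is invertible: the argument of Proposition \ref{inv} applies verbatim with matrix majorization in place of gw-majorization, because $0=R\cdot 0$ holds for row stochastic $R$ as well, forcing $\ker T=\{0\}$. Strong preservation of matrix majorization then means exactly that both $T$ and $T^{-1}$ preserve matrix majorization. Applying the core step to each of $T$ and $T^{-1}$ shows both preserve gw-majorization, and the two implications $X\succ_{gw}Y\Rightarrow T(X)\succ_{gw}T(Y)$ (from $T$) and $T(X)\succ_{gw}T(Y)\Rightarrow X\succ_{gw}Y$ (from $T^{-1}$) together say that $T$ strongly preserves gw-majorization.

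I expect the only genuine obstacle to be the affine-hull lemma, and in particular its treatment over $\mathbb{C}$: since ``row stochastic'' means real nonnegative entries, the complex case requires decomposing a $g$-row stochastic matrix into its real and imaginary parts, noting that the real part has all row sums $1$ (so lies in the real affine hull of $\mathbf{R}_n$) while the imaginary part has all row sums $0$ (so lies in the real linear span of differences of row stochastic matrices), and then recombining these into a single complex affine combination. Once that lemma is secured, everything else is routine linearity bookkeeping.
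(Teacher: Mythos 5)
Your proof is correct and takes essentially the same route as the paper's: write the g-row stochastic matrix as an affine combination $R=\sum_t c_tS_t$ of row stochastic matrices, push each $S_tX$ through $T$ using preservation of matrix majorization, recombine by linearity into a g-row stochastic multiplier for $T(X)$, and obtain the converse implication by running the same argument on $T^{-1}$. The only difference is that you actually prove the affine-decomposition lemma (including the complex case via real and imaginary parts) and the invertibility of $T$, both of which the paper asserts or uses implicitly \emph{--} a tightening of the same argument rather than a different approach.
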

\begin{proof}
Let $A\succ_{gw}B$. Then there exists a g-row stochastic matrix R
such that B=RA. For the g-row stochastic matrix R, there exist
scalars $r_1 ,...,r_k$ and row stochastic matrices $R_1 ,..., R_k$
such that $\sum_{i=1}^k r_i=1 $ and $R=\sum_{i=1}^k r_i R_i$. For
every i $(1\leq i\leq k)$, $A\succ_{}R_i A$ and hence
$T(A)\succ_{}T(R_i A)$. Then there exist row stochastic matrices
$S_i$ $(1\leq i\leq k)$, such that $T(R_i A)=S_i T(A)$. Put
S=$\sum_{i=1}^k r_i S_i$ , it is clear that S is a g-row
stochastic matrix and $T(B)=S T(A)$. Therefore
$T(A)\succ_{gw}T(B)$. For other side replace T by $T^{-1}$ and
similarly conclude that $A\succ_{gw}B$ where $T(A)\succ_{gw}T(B)$.
Then T strongly preserves gw-majorization.
\end{proof}
\begin{example}
Let the linear operator $T:\mathbf{M}_{2}\rightarrow
\mathbf{M}_{2}$ be such that
T(X)=AX, where $A=\left(%
\begin{array}{cc}
  1 & 0\\
  -1 & 2 \\
\end{array}%
\right)$. It is clear that T strongly preserves gw-majorization by
Theorem \ref{main} . But T doesn't strongly preserve matrix
majorization. For this cosider the following matrices:

 $\left(%
\begin{array}{cc}
  1 & 0 \\
  0 & 0 \\
\end{array}%
\right)\ \ \  and \ \ \left(%
\begin{array}{cc}
  0 & 0 \\
  1 & 0 \\
\end{array}%
\right).$
\end{example}

Now we state the following corollary that characterize all linear
operator that strongly preserve matrix majorization on
$\mathbf{M}_{n}$.
\begin{corollary}[Theorem 5.2 ,\ref{radjab}]
A linear operator $T:\mathbf{M}_{n}\rightarrow \mathbf{M}_{n}$
 strongly preserves matrix majorization  $\succ$  if and only if
$T(X)=PXL$, where P is permutation and $L\in \textbf{M}_n$ is
invertible .
\end{corollary}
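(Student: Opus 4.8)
The final statement to prove is the Corollary characterizing strong linear preservers of matrix majorization on $\mathbf{M}_n$, claiming $T(X)=PXL$ with $P$ a permutation and $L$ invertible.

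The plan is to leverage the main Theorem together with Proposition \ref{implygw} to squeeze the structure of $T$ from both sides. First I would observe that by Proposition \ref{implygw}, any $T$ strongly preserving matrix majorization also strongly preserves gw-majorization, so Theorem \ref{main} immediately gives the form $T(X)=AXL$ for some invertible $A\in\mathbf{GR}_n$ and invertible $L\in\mathbf{M}_n$ (here $m=n$). The entire content of the corollary then reduces to upgrading the g-row stochastic factor $A$ to a genuine permutation matrix $P$. The converse direction is routine: if $T(X)=PXL$ with $P$ a permutation, then for any row stochastic $R$ one checks $PRP^{-1}$ is again row stochastic, so $A\succ B$ via $B=RA$ transports to $T(A)\succ T(B)$, and invertibility of $P$ and $L$ handles the reverse implication.

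For the forward direction, once $T(X)=AXL$ is in hand, I would isolate the requirement that $T$ strongly preserves \emph{matrix} (nonnegative row stochastic) majorization, not merely gw-majorization. The key is to feed $T$ specific test matrices on which matrix majorization holds and demand the image pair also be matrix-majorization related. Concretely, I would pick $X$ whose columns realize the standard basis structure so that $RX$ ranges over nonnegative combinations, and then force $A$ to map the row-stochastic cone into itself in both directions. Since $A$ is already g-row stochastic ($Ae=e$) and invertible with $A^{-1}$ also g-row stochastic by Remark \ref{close}, the extra positivity constraint coming from matrix majorization should pin down that both $A$ and $A^{-1}$ are nonnegative; an invertible nonnegative matrix whose inverse is also nonnegative and which fixes $e$ is necessarily a permutation matrix. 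Absorbing any residual diagonal scaling into $L$ yields the clean form $T(X)=PXL$.

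The main obstacle I anticipate is the step forcing nonnegativity of $A$ from the matrix-majorization hypothesis. Preservation of gw-majorization only controls the affine/linear-algebraic skeleton (the equality $B=RA$ with $Re=e$), and one must genuinely exploit that matrix majorization additionally requires $R\ge 0$ entrywise. The careful choice of test configurations---matrices $A_0, B_0$ with $A_0\succ B_0$ under a nonnegative stochastic $R$ but $A_0\not\succeq B_0$ once negativity is allowed, paralleling the contrast in the Example above---is where the argument must do real work rather than cite Theorem \ref{main}. Making this selection robustly across all $n$, and verifying that the forced nonnegativity plus $Ae=e$ and invertibility of $A^{-1}$ collapse $A$ to a permutation, is the delicate part; everything else is bookkeeping that rides on the already-established main Theorem.
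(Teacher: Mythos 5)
Your skeleton coincides exactly with the paper's: Proposition \ref{implygw} plus Theorem \ref{main} yield $T(X)=AXL$ with $A\in\mathbf{GR}_n$ and $L\in\mathbf{M}_n$ invertible, and the corollary then reduces to showing $A$ and $A^{-1}$ are nonnegative, so that $A$ is monomial and $Ae=e$ forces it to be a permutation. However, the step you repeatedly defer as ``the delicate part'' --- extracting positivity from preservation of \emph{matrix} majorization --- is the only step of the corollary with content beyond citing earlier results, and you never execute it; your talk of hunting for pairs $A_0\succ B_0$ that fail once negativity is allowed (modelled on the Example) is also more roundabout than what is needed. The paper's execution is essentially a one-liner, and positivity is read off the \emph{mediating} stochastic matrix rather than from any counterexample pair: for every row stochastic $R$ one has $I\succ R$ (the mediating matrix is $R$ itself), hence $T(I)\succ T(R)$, i.e. $ARL=S\,AL$ for some row stochastic $S$, so $S=ARA^{-1}$ must be row stochastic for \emph{every} row stochastic $R$. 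Taking $R=ee_j^t$ and using $Ae=e$ gives $S=e\,(e_j^tA^{-1})$, whose rows are all the $j$th row of $A^{-1}$; nonnegativity of $S$ thus forces the $j$th row of $A^{-1}$ to be nonnegative for each $j$, so $A^{-1}$ is row stochastic. Running the same argument on $T^{-1}(X)=A^{-1}XL^{-1}$, which also strongly preserves $\succ$, gives that $A$ is row stochastic; an invertible nonnegative matrix with nonnegative inverse is monomial, and $Ae=e$ then makes $A$ a permutation.

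One further slip: your remark about ``absorbing any residual diagonal scaling into $L$'' is both impossible and unnecessary. A left factor $A=DP$ with $D$ diagonal acts on rows and cannot be commuted across $X$ to join the right-hand factor $L$, so no such absorption is available; fortunately the constraint $Ae=e$ already forces the diagonal scaling to be trivial, so nothing needs absorbing. Your converse direction (conjugation $PRP^{-1}$ of a row stochastic $R$ by a permutation $P$ is again row stochastic, plus invertibility for the reverse implication) is correct and matches what the paper leaves implicit.
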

\begin{proof}
Let T strongly preserves matrix majorization. Then T strongly
preserves gw-majorization by Proposition \ref{implygw}. Therefore
in view of  Theorem \ref{main}  there exist invertible matrices
$A\in \textbf{GR}_n$ and $B\in \textbf{M}_n$ such that $T(X)=AXB$
for all $X\in \textbf{M}_n$ . For every  row stochastic matrix R,
it is clear that $I\succ R$.
 So $T(I)\succ T(R)$ for every row stochastic matrix R. Then
$AIB\succ ARB$ and hence $ RA^{-1}$ is a row stochastic matrix,
for every row stochastic matrix R. It is easy to show that
$A^{-1}$ is a row stochastic matrix. Similarly A is a row
stochastic matrix too and hence A is a permutation matrix.
\end{proof}


\bigskip


\end{document}